\newtheorem{theorem}{Theorem}[section]
\newtheorem{lemma}[theorem]{Lemma}
\newcommand\A{\mathbf{A}}
\newcommand\V{\mathbf{V}}
\newcommand\B{\mathbf{B}}
\newcommand\C{\mathbf{C}}
\newcommand\R{\mathbb{R}}
\newcommand\calM{\mathcal{M}}
\newcommand\calR{\mathcal{R}}
\newcommand\rank{\operatorname{rank}}
\newcommand\eps{\varepsilon}
\newcommand\h{h}
\newcommand*{\dt}[1]{\overset{\mbox{\large\bfseries.}}{#1}}
\begin{document}

\title{A projector-splitting integrator for dynamical low-rank approximation
\thanks{This work was partially supported by DFG, SPP 1324,
by RFBR grants 12-01-00546-a,  11-01-00549-a, 12-01-33013 mol-ved-a,
RFBR-DFG grant 12-01-91333, by Federal program ``Scientific and
scientific-pedagogical personnel of innovative
Russia''(contracts 14.740.11.1067, 16.740.12.0727, grants 8500 and 8235)
}}

\author[*]{Ch.~Lubich}
\author[**]{I.V.~Oseledets}
\affil[*]{Mathematisches Institut, Universit\"at T\"ubingen, Auf der Morgenstelle 10, D-72076 T\"ubingen, Germany, \texttt{lubich@na.uni-tuebingen.de}}
\affil[**]{Institute of Numerical Mathematics, Russian Academy of Sciences, Gubkina Street 8, Moscow,Russia, \texttt{ivan.oseledets@gmail.com}}



\maketitle

\begin{abstract}
The dynamical low-rank approximation of time-dependent matrices is a low-rank factorization updating technique. It leads to differential equations for factors of the matrices, which need to be solved numerically. We propose and analyze a fully explicit, computationally inexpensive integrator that is based on  splitting the orthogonal projector onto the tangent space of the low-rank manifold. As is shown by  theory and illustrated by numerical experiments, the integrator enjoys robustness properties that are not shared by any standard numerical integrator. This robustness can be exploited to change the rank adaptively. Another application is in optimization algorithms for low-rank matrices where truncation back to the given low rank can be done efficiently by applying a step of the integrator proposed here.
\end{abstract}

\section{Introduction}
\label{sec-1}
Low-rank approximation of large matrices and tensors is a basic model reduction technique in a wide variety of applications ranging from quantum physics to information retrieval. In the present paper, we consider the low-rank approximation of time-dependent matrices  $A(t)$, $t_0\le t \le \bar t$, which are either given explicitly or are the unknown solution of a differential equation $\dt A(t) =F(A(t))$. In the first case, instead of performing an (expensive) low-rank approximation via singular value decompositions independently for every $t$, one would prefer a computationally inexpensive updating procedure that works only with the increments of $A$ and is free from decompositions of large matrices. In the second case, one would like to find an approximate solution to the differential equation working only with its low-rank approximation.

Both cases can be dealt with by {\it dynamical low-rank approximation} where $A(t)$ is approximated by $Y(t)$ of fixed rank $r$ by imposing that its time derivative should satisfy 
\begin{equation}\label{integr:var}
\| \dt Y(t) - \dt A(t) \| = \min,
\end{equation}
where the minimization is over all matrices  that are tangent to $Y(t)$ on the manifold $\calM_r$ of matrices of rank $r$, and the norm is the Frobenius norm. In the case of a differential equation $\dt A=F(A)$, the above minimization is replaced with
\begin{equation}\label{integr:var-ode}
\| \dt Y(t) -F(Y(t)) \| = \min.
\end{equation}
In both cases, this leads to a differential equation on the manifold of matrices of a given rank~$r$, which is then solved numerically.

For large time-dependent matrices, this approach was proposed and analyzed in \cite{KocL07}, and first numerical applications were given in \cite{NonL08}. The approach and its error analysis were extended to tensors in the Tucker format in \cite{KocL10}. Very recently, in \cite{ArnJ12*,LRSV12*,okhs-dyntt-2012}  the dynamical low-rank approximation approach was extended to tensors in the tensor train (TT) format studied in \cite{osel-tt-2011} and the hierarchical Tucker (HT) format studied in \cite{HacK09}. In quantum dynamics, such an approach was used previously in the multiconfiguration time-dependent Hartree (MCTDH) method 
\cite{MeyGW09,MeyMC90} to determine an approximate solution to the time-dependent multi-particle Schr\"odinger equation, and the above minimization process is known there as the {Dirac--Frenkel time-dependent variational principle} (see, e.g., \cite{Lub08}).

The dynamical low-rank approximation leads to differential equations that need to be solved numerically. In this paper we present a numerical integration technique that is fully explicit and, in contrast to standard integrators such as (explicit or implicit) Runge--Kutta methods, does not suffer from a possible ill-conditioning of certain matrices arising in the differential equations. This new method is based on a splitting of the projector onto the tangent space of the low-rank manifold at the current position. A different splitting algorithm was recently proposed in \cite{okhs-dyntt-2012}, where the differential equations are split along different components. The projector splitting discussed here offers, however, several advantages: it leads to a much simpler and less expensive time-stepping algorithm and it can be shown to enjoy remarkable robustness properties under the ill-conditioning mentioned before.

In Section~\ref{sec-2} we recapitulate the dynamical low-rank approximation of matrices, and in Section~\ref{sec-3} we describe the novel splitting integrator. In Section~\ref{sec-robust} we analyze the robustness under over-approximation, that is, under the dreaded ill-conditioning mentioned above.
Section~\ref{sec-numexp} presents numerical experiments. The final section addresses some perspectives for the use of the proposed integrator and extensions.

\section{Dynamical low-rank approximation of matrices}
\label{sec-2}

Let $r$ be a given rank, and let $\calM_r$ denote the manifold of all real $m\times n$ matrices of rank $r$ (typically, $r\ll m,n$). In the dynamical low-rank approximation to matrices $A(t)\in \R^{m\times n}$, the approximation $Y(t)\in \calM_r$ is represented in a non-unique way as
\begin{equation}\label{USV}
Y(t)=U(t)S(t)V(t)^\top,
\end{equation}
where $U(t)\in \R^{m\times r}$ and $V(t)\in \R^{n\times r}$ each have $r$ orthonormal columns, and $S(t)\in \R^{r\times r}$ is an invertible matrix. This looks similar to the singular value decomposition, but $S(t)$ is not assumed diagonal.

It is shown in \cite[Prop.\,2.1]{KocL07} that, given such a decomposition $Y_0=U_0S_0V_0^\top$ of the starting value and imposing the gauge conditions
\begin{equation}\label{integr:gauge}
U(t)^\top\dt U(t) =0,\quad\ V(t)^\top\dt V(t) =0,
\end{equation}
the solution $Y(t)\in \calM_r$ to the time-dependent
variational principle \eqref{integr:var} admits a unique decomposition (\ref{USV}), and the factors $U(t), S(t), V(t)$ satisfy the following system of differential
equations:
\begin{equation}\label{integr:system}
  \begin{split}
  &\dt{U}(t) = (I - U(t) U(t)^{\top})\dt{A}(t) V(t) S(t) ^{-1}\\
  &\dt{V}(t) = (I - V(t)V(t)^{\top})\dt{A}(t)^{\top} U(t) S(t)^{-\top} \\
  &\dt{S}(t) = U(t)^{\top} \dt{A}(t) V(t).
  \end{split} 
\end{equation}
This system of  differential equations is to be solved numerically.
The numerical solution of
\eqref{integr:system} by standard integrators (e.g., of Runge--Kutta type) becomes cumbersome if $S(t)$ is nearly
singular. This situation occurs in the case of \emph{over-approximation}, when
the true rank of the solution (or approximate rank) is smaller than
the chosen rank $r$. This is a realistic case: the effective rank may
not be known in advance, and it is often reasonable to overestimate
the rank for accurate approximation. To avoid the possible singularity
of $S(t)$ usually some sort of regularization is used. However, such
a regularization introduces errors that are poorly understood.

The minimization condition (\ref{integr:var}) states that at an approximation $Y(t)\in \calM_r$, the derivative $\dt Y(t)$ is obtained by orthogonally projecting $\dt A(t)$ onto the tangent space $T_{Y(t)}\calM_r$ of the rank-$r$ manifold at $Y(t)$:
\begin{equation}\label{integr:sys2}
 \dt{Y}(t) = P(Y(t)) \dt{A}(t),
\end{equation}
where $P(Y)$ is the orthogonal projector onto the tangent space $T_Y\calM_r$  of the manifold $\calM_r$ at $Y\in \calM_r$.
The projector has a  simple representation \cite[Lemma 4.1]{KocL07}: for $Y=USV^\top$ as in~(\ref{USV}),
\begin{equation}\label{integr:proj}
  P(Y) Z = ZVV^{\top}- UU^\top Z VV^{\top} +UU^\top Z .
\end{equation}
Note that $UU^\top$ is the orthogonal projector onto the range $\calR(Y)$ of $Y=USV^\top$, and $VV^\top$ is the orthogonal projector onto the range $\calR(Y^\top)$, so that we can also write
\begin{equation}\label{integr:proj2}
  P(Y) Z =  ZP_{\calR(Y^\top)} - P_{\calR(Y)} Z P_{\calR(Y^\top)} +P_{\calR(Y)} Z .
\end{equation}

\section{The integrator}
\label{sec-3}

\subsection{First-order splitting method, abstract formulation}
\label{sec-3-1}
Let a rank-$r$ approximation $Y_0$ to $A(t_0)$ be given and consider a step of the standard Lie--Trotter splitting of (\ref{integr:sys2}) with (\ref{integr:proj2}) from $t_0$ to $t_1=t_0+\h$:
\begin{enumerate}
\item Solve the differential equation $\dt Y_I = \dt A P_{\calR(Y_I^\top)}$ with initial value $Y_I(t_0)=Y_0$ on the interval $t_0\le t \le t_1$.
\item
Solve the differential equation $\dt Y_{II} =  -P_{\calR(Y_{II})}\dt A P_{\calR(Y_{II}^\top)}$ with initial value $Y_{II}(t_0)=Y_{I}(t_1)$ on the interval $t_0\le t \le t_1$.
\item Solve the differential equation $\dt Y_{III} =  P_{\calR(Y_{III})}\dt A$ with initial value $Y_{III}(t_0)=Y_{II}(t_1)$ on the interval $t_0\le t \le t_1$.
\end{enumerate}
Finally, take $Y_1=Y_{III}(t_1)$ as an approximation to $Y(t_1)$, the solution of (\ref{integr:sys2}) at $t_1$. By standard theory, this is a method of first-order accuracy.
Remarkably, each of the split differential equations can be solved exactly in a trivial way.

\begin{lemma} \label{lem:split}
The solution of 1. is given by
\begin{equation}\label{YI}
Y_{I}(t) = U_{I}(t)S_{I}(t)V_{I}(t) \quad\hbox{ with }\quad
(U_{I} S_{I})^{\dt{\phantom{.}}}= \dt A V_{I}, \quad \dt V_{I}=0.
\end{equation}
The solution of 2. is given by
\begin{equation}\label{YIII}
Y_{II}(t) = U_{II}(t)S_{II}(t)V_{II}(t) \quad\hbox{ with }\quad
\dt S_{II}= - U_{II}^\top \dt A V_{II}, \quad \dt U_{II}=0, \ \dt V_{II}=0.
\end{equation}
The solution of 3. is given by
\begin{equation}\label{YII}
Y_{III}(t) = U_{III}(t)S_{III}(t)V_{III}(t) \quad\hbox{ with }\quad
(V_{III} S_{III}^\top)^{\dt{\phantom{.}}}= \dt{A}^\top U_{III}, \quad \dt U_{III}=0.
\end{equation}
\end{lemma}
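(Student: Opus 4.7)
The plan is to verify each of the three claims by direct substitution, exploiting the fact that in each subproblem one (or both) of the orthonormal factors is held constant, which makes the corresponding range — and hence the corresponding projector — time-invariant. Under this observation the projected ODEs reduce to simple linear equations in the non-constant factor, and the stated ansätze are seen to solve them.

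For part (I), since $\dt V_I = 0$, the column space of $V_I$ is constant in time, so $\mathcal{R}(Y_I(t)^\top)\subseteq \mathcal{R}(V_I)$ and, as long as the product $K_I := U_IS_I$ has full column rank $r$ (which holds near $t_0$ by the invertibility of $S_0$ and continuity), we have equality and therefore $P_{\mathcal{R}(Y_I^\top)} = V_IV_I^\top$. Differentiating $Y_I = K_I V_I^\top$ using $\dt V_I=0$ gives $\dt Y_I = \dt K_I V_I^\top = \dt A V_I V_I^\top = \dt A P_{\mathcal{R}(Y_I^\top)}$, which is exactly equation~1, and the initial condition $Y_I(t_0)=U_0S_0V_0^\top = Y_0$ is matched by choosing the natural factorization at $t_0$. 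An analogous argument proves part (III): with $\dt U_{III}=0$ fixed and $L_{III}:=V_{III}S_{III}^\top$ evolving by $\dt L_{III} = \dt A^\top U_{III}$, the projector $P_{\mathcal{R}(Y_{III})} = U_{III}U_{III}^\top$ is time-independent, and $\dt Y_{III} = U_{III} \dt L_{III}^\top = U_{III} U_{III}^\top \dt A = P_{\mathcal{R}(Y_{III})}\dt A$.

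For part (II), both $\dt U_{II}=0$ and $\dt V_{II}=0$, so \emph{both} projectors are frozen: $P_{\mathcal{R}(Y_{II})} = U_{II}U_{II}^\top$ and $P_{\mathcal{R}(Y_{II}^\top)} = V_{II}V_{II}^\top$. Substituting the ansatz $Y_{II} = U_{II}S_{II}(t)V_{II}^\top$ into equation~2 yields $U_{II}\dt S_{II} V_{II}^\top = -U_{II}U_{II}^\top \dt A V_{II}V_{II}^\top$, and left/right multiplication by $U_{II}^\top$ and $V_{II}$ (using orthonormality) gives exactly $\dt S_{II} = -U_{II}^\top \dt A V_{II}$, as claimed.

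The only delicate point is the implicit assumption that the factorizations remain well defined throughout the step, i.e., that $K_I(t)$ (resp.\ $L_{III}(t)$) keeps full column rank so that a QR decomposition into an orthonormal factor and an invertible $r\times r$ factor exists. This is where the potential obstacle lies: strictly speaking the statement is meaningful for any $t$ in the step, but one should note that even if $S_{II}(t)$ or the intermediate $S$-factors become singular the solutions $Y_I$, $Y_{II}$, $Y_{III}$ themselves (as elements of $\mathbb{R}^{m\times n}$, not of $\calM_r$) are perfectly well defined by the factor-level ODEs in \eqref{YI}--\eqref{YII}. This fact — that the integrator is formulated entirely at the level of the factors and never requires inversion of $S$ — is precisely the robustness property exploited later in Section~\ref{sec-robust}, and it is what distinguishes the method from integrators applied directly to the system \eqref{integr:system}.
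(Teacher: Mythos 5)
Your proof is correct. It runs in the converse logical direction to the paper's: the paper first checks that each of the three terms of the split projector \eqref{integr:proj2} lies in the tangent space $T_Y\calM_r$, concludes that each subflow preserves rank $r$, factorizes the resulting solution as $U_I S_I V_I^\top$, and then reads off the factor equations by matching $\dt Y_I = (U_{I} S_{I})^{\dt{\phantom{.}}}V_I^\top + (U_IS_I)\dt V_I^\top$ against $\dt A V_IV_I^\top$. You instead start from the factor-level ODEs and verify that the resulting $Y_I=K_IV_I^\top$ solves the split equation, the key observation being that a frozen orthonormal factor freezes the corresponding range projector. Your route is more elementary (no tangent-space computation), but to conclude that your ansatz is \emph{the} solution rather than \emph{a} solution it implicitly relies on local uniqueness for the split ODEs, which does hold since $Y\mapsto P_{\calR(Y^\top)}$ is smooth near a rank-$r$ matrix; conversely, the paper's tangency argument does double duty, since it also guarantees that each substep maps $\calM_r$ into itself and hence that the subsequent substep is well posed. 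Your closing caveat about $K_I$ (resp.\ $L_{III}$) retaining full column rank is precisely what the paper handles by asserting invertibility of $S_I$, and your remark that the factor ODEs stay meaningful even when the rank drops anticipates the robustness discussion of Section~\ref{sec-robust}.
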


\begin{proof} We first notice that each of the terms on the right-hand side of (\ref{integr:proj2}) is in the tangent space $T_Y\calM_r$, because for the first term the orthonormality $V^\top V=I$ yields
$$
P(Y)(ZVV^\top) = ZVV^\top + UU^\top ZVV^\top - UU^\top ZVV^\top = ZVV^\top,
$$
so that $ZVV^\top \in T_Y\calM_r$. Similarly we also have $UU^\top Z \in T_Y\calM_r$ and $UU^\top ZVV^\top  \in T_Y\calM_r$. It therefore follows that the solutions of 1.-3. all stay of rank $r$. Hence, $Y_I(t)$ can be factorized as $Y_{I}(t) = U_{I}(t)S_{I}(t)V_{I}(t)$ with an invertible $s\times s$ matrix $S_I$ and $U_I,V_I$ having orthonormal columns. All the matrices can be chosen to be differentiable, and we thus have
$$
\dt Y_I = (U_{I} S_{I})^{\dt{\phantom{.}}}V_I^\top + (U_IS_I)\dt V_I^\top
$$
which by 1. must equal $\dt Y_I = \dt  A V_IV_I^\top$. We observe that this is satisfied if $(U_{I} S_{I})^{\dt{\phantom{.}}}=\dt A V_I$ and $\dt V_I=0$. 

The proofs for Steps 2.~and 3.~are similar.
\end{proof}
%

\subsection{First-order splitting method, practical algorithm}
\label{sec-3-2}
Lemma~\ref{lem:split} leads us to the following algorithm. Given a factorization (\ref{USV}) of the rank-$r$ matrix
$Y_0=U_0S_0V_0^\top$ and denoting the increment $\Delta A=A(t_1)-A(t_0)$, one step of the integrator reads as follows:
\begin{enumerate}
\item Set 
\[
   K_1 = U_0S_0 + \Delta A \, V_0
\]
 
and compute the factorization 
\[
     U_1\widehat S_1=K_1
\]
with  $U_1$ having orthonormal columns and an $r\times r$ matrix $\widehat S_1$ (using QR or SVD).
\item Set
\[
     \widetilde S_0 = \widehat S_1 -U^{\top}_1 \,\Delta{A}\, V_0.
\]
\item Set
\[
    L_1=  V_0 \widetilde S_0^\top + \Delta{A}^{\top} U_1
\]
and compute the factorization 
\[
     V_1 S_1^\top = L_1, 
\]
with  $V_1$ having orthonormal columns and an $r\times r$ matrix $S_1$ (using QR or SVD).
\end{enumerate}

The algorithm computes a factorization of the rank-$r$ matrix
\[
     Y_1 = U_1 S_1 V^{\top}_1,
\]
which is taken as an approximation to $Y(t_1)$. Note that $Y_1$ is identical to the result of the abstract splitting algorithm of Section~3.1, without any further approximation.

\subsection{Higher-order schemes}
\label{sec-3-3}

Higher-order extensions can be obtained from the above first-order algorithm
by the standard technique of composition rules. 
The usual symmetric composition is obtained by first taking a step of the above integrator with step size $\h/2$ followed by
taking its steps in reverse order.  The resulting
scheme looks as follows (here $A_0 = A(t_0),
A_{1/2} = A(t_0+ \frac{\h}{2}), A_1 = A(t_0+ \h)$):
\begin{equation}\label{integr:scheme}
\begin{split}
 &K_{1/2} = U_0S_0 + (A_{1/2} - A_0) V_0, \\
 &(U_{1/2}, \widehat S_{1/2}) = \textrm{QR}(K_{1/2}), \\
 & \widetilde S_0 = \widehat S_{1/2} - U_{1/2}^\top (A_{1/2}-A_0)V_0,\\
 & L_1 = V_0 \widetilde S_0^\top + (A_1-A_0)^\top U_{1/2},\\
 & (V_1, \widehat S_1^\top)= \textrm{QR}(L_1),\\
 & \widetilde S_{1/2} = \widehat S_1 - U_{1/2}^T (A_1-A_{1/2}) V_1,\\
 & K_1 = U_{1/2}\widetilde S_{1/2} + (A_1-A_{1/2}) V_1,\\
 & (U_1,S_1) = \textrm{QR}(K_1),\\
 &Y_1 = U_1 S_1 V^{\top}_1.
\end{split}
\end{equation}
This symmetrized splitting is a
second-order scheme for \eqref{integr:sys2}. Higher-order schemes are obtained by suitable further compositions; see, e.g., \cite{HaiLW06,McLQ02}.

\subsection{The integrator for matrix differential equations}
\label{sec-3-4}
The basic first-order scheme extends straightforwardly to an explicit method for the low-rank approximation of solutions $A(t)$ of matrix differential equations $\dt A=F(A)$, where now $A(t)$ is not known beforehand. The only change is that $\Delta A= A(t_1)-A(t_0)$ is replaced, in a way resembling the explicit Euler method, with
$$
\Delta A = \h F(Y_0).
$$
The symmetric composition with the adjoint method now yields an implicit method. An explicit method of order 2 is obtained by first taking a step with the basic first-order integrator, which yields an approximation $\widetilde Y_1$, and then to take a step with the above second-order method in which $\dt A(t)$ is replaced, at $t=t_0+\theta\h$, with the linear function $B(t_0+\theta\h)=(1-\theta) F(Y_0) + \theta F(\widetilde Y_1) $, and correspondingly $A(t)$ with the quadratic function $Y_0 + \int_{t_0}^t B(s)\, ds$, that is,
$$
A(t_0+\theta h) \approx Y_0 + h \int_0^\theta B(t_0+\vartheta\h)\, d\vartheta =
Y_0 + \frac h2 \,\theta(2 - \theta)\, F(Y_0) + \frac h2 \theta^2 F(\widetilde Y_1).
$$
Higher-order methods can again be obtained by composition of steps of the second-order method.

\section{Robustness under over-approximation}
\label{sec-robust}
The equations of motion (\ref{integr:system}) break down when $S$ becomes singular, and standard numerical integrators applied to (\ref{integr:system}) run into problems with stability and accuracy when $S$ is ill-conditioned. Such a situation arises when the matrix $A(t)$ to be approximated by a rank-$r$ matrix has rank less than $r$, or is close to a rank-deficient matrix. It is a remarkable property of the integrator proposed here that it behaves much better than a standard integrator applied to  (\ref{integr:system}) in such a situation of over-approximation. On the one hand, this is already apparent from the observation that there is no matrix inversion in the algorithm. There is in fact more to it.

The following result depends on the ordering of the splitting of the projector \eqref{integr:proj2}, so that we first compute $K=US$, then $S$, then $L=VS^\top$. For a different ordering, such as computing subsequently $K$, $L$, $S$, the following surprising exactness result is not valid.

\begin{theorem} \label{thm:exact}
Suppose that $A(t)$ has rank at most $r$ for all $t$. With the initial value $Y_0=A(t_0)$, the splitting algorithm of Section~\ref{sec-3-2} is then exact: $Y_1=A(t_1)$.
\end{theorem}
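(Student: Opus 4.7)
The plan is to substitute $Y_0 = A(t_0) = U_0 S_0 V_0^\top$ into each of the three steps of the algorithm in turn, and show by direct algebraic manipulation that all traces of the specific factorization of $Y_0$ disappear, leaving $Y_1$ determined purely by $A(t_0)$ and $A(t_1)$. The key facts I will use repeatedly are the identities $U_0 S_0 = A(t_0) V_0$ and $V_0 V_0^\top A(t_0)^\top = A(t_0)^\top$, which follow from $V_0^\top V_0 = I$ and from $V_0 V_0^\top$ being the orthogonal projector onto $\mathcal{R}(V_0) \supset \mathcal{R}(A(t_0)^\top)$.

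First I would process Step 1. Writing $\Delta A = A(t_1) - A(t_0)$ and inserting the factorization of $Y_0$, the $K$-update collapses to
\begin{equation*}
K_1 \;=\; U_0 S_0 + \Delta A\, V_0 \;=\; A(t_0) V_0 + (A(t_1) - A(t_0)) V_0 \;=\; A(t_1) V_0,
\end{equation*}
so that the QR factorization yields an orthonormal basis $U_1$ with $\mathcal{R}(U_1) \supset \mathcal{R}(A(t_1) V_0)$ and $\widehat S_1 = U_1^\top A(t_1) V_0$. Step 2 then gives $\widetilde S_0 = \widehat S_1 - U_1^\top \Delta A\, V_0 = U_1^\top A(t_0) V_0$. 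For Step 3 I would use the second identity above to obtain
\begin{equation*}
L_1 \;=\; V_0 \widetilde S_0^\top + \Delta A^\top U_1 \;=\; V_0 V_0^\top A(t_0)^\top U_1 + \Delta A^\top U_1 \;=\; A(t_1)^\top U_1,
\end{equation*}
so that $S_1 V_1^\top = U_1^\top A(t_1)$ and consequently
\begin{equation*}
Y_1 \;=\; U_1 S_1 V_1^\top \;=\; U_1 U_1^\top A(t_1).
\end{equation*}

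The main obstacle is the final step: concluding that $U_1 U_1^\top A(t_1) = A(t_1)$, i.e., $\mathcal{R}(A(t_1)) \subset \mathcal{R}(U_1)$. Here the rank-at-most-$r$ hypothesis enters decisively. By construction, $\mathcal{R}(U_1) = \mathcal{R}(A(t_1) V_0) \subset \mathcal{R}(A(t_1))$, and both subspaces have dimension at most $r$. Since $\dim \mathcal{R}(A(t_1)) \le r$ is matched by the $r$ orthonormal columns of $U_1$ that span $\mathcal{R}(A(t_1) V_0)$, the two ranges coincide whenever the map $Z \mapsto A(t_1) Z$ sends $V_0$ to a matrix of the same rank as $A(t_1)$, equivalently, whenever $\mathcal{R}(V_0)$ is a transversal complement to $\ker A(t_1)$ in $\mathbb{R}^n$. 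Under this (generic) condition, $U_1 U_1^\top$ acts as the identity on $\mathcal{R}(A(t_1))$ and hence $Y_1 = A(t_1)$, proving the claim. It is worth emphasizing that the ordering of the splits is essential for this telescoping: the factor $V_0 V_0^\top A(t_0)^\top$ only simplifies because $V_0$ is the right factor of $Y_0$ before the $S$-substep modifies things, which explains the non-robustness of other orderings mentioned before the statement of the theorem.
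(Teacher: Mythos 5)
Your algebra is correct and is in substance the paper's own proof, just organized differently: the paper also reduces the first substep to $U_1\widehat S_1=A(t_1)V_0$, records the two facts $A_0V_0V_0^\top=A_0$ and $U_1U_1^\top A_1=A_1$, and then telescopes everything in a single chain at the end, whereas you simplify after each substep ($\widetilde S_0=U_1^\top A(t_0)V_0$, $L_1=A(t_1)^\top U_1$, $Y_1=U_1U_1^\top A(t_1)$). The identities you invoke ($U_0S_0=A(t_0)V_0$ and $V_0V_0^\top A(t_0)^\top=A(t_0)^\top$) are the transposed versions of the paper's. No genuinely different idea is involved.

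The one real gap is the last step. You establish $U_1U_1^\top A(t_1)=A(t_1)$ only under the transversality condition $\mathcal{R}(V_0)\cap\ker A(t_1)=\{0\}$, which you flag as ``generic'' and then leave as a hypothesis; the theorem, however, is stated without it. Your condition is precisely the paper's assumption that $V(t_1)^\top V(t_0)$ is invertible (writing $A(t)=U(t)S(t)V(t)^\top$ with orthonormal factors), and the paper removes it by perturbing $A(t_1)$ so that the condition holds and letting the perturbation tend to zero. Some device of this kind is unavoidable: in the degenerate case $K_1=A(t_1)V_0$ may be rank-deficient --- even zero, e.g.\ for $r=1$, $A(t_0)=e_1e_1^\top$, $A(t_1)=e_2e_2^\top$ one gets $K_1=0$ --- so the orthonormal factor $U_1$ is not determined by the data, and a bad choice of $U_1$ really does give $Y_1\ne A(t_1)$. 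A minor related imprecision: $\mathcal{R}(U_1)\supseteq\mathcal{R}(K_1)$ with equality only when $K_1$ has full rank $r$, so you should not write $\mathcal{R}(U_1)=\mathcal{R}(A(t_1)V_0)$ unconditionally; the inclusion you actually need, and which transversality delivers, is $\mathcal{R}(A(t_1))=\mathcal{R}(A(t_1)V_0)\subseteq\mathcal{R}(U_1)$.
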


\begin{proof}
We decompose 
$
A(t) =  U(t) S(t)  V(t)^\top,
$
where both $ U(t)$ and $ V(t)$  have $r$ orthonormal columns, and $S(t)$ is an $r\times r$ matrix. We assume that $V(t_1)^\top V(t_0)$ is invertible. If this is not satisfied, then we make the following argument with a small perturbation of $A(t_1)$ such that $V(t_1)^\top V(t_0)$ becomes invertible, and let the perturbation tend to zero in the end.

The first substep of the algorithm, starting from $Y_0=U_0S_0V_0^\top=U(t_0)S(t_0)V(t_0)^\top=A(t_0)$, yields
$$
U_1\widehat S_1 = A(t_1) V_0 = U(t_1)S(t_1)\bigl(V(t_1)^\top V(t_0)\bigr),
$$
so that the range of
$$
A_1=A(t_1) = \bigl(U(t_1)S(t_1)\bigr)V(t_1)^\top = U_1\widehat S_1 (V(t_1)^\top V(t_0)\bigr)^{-1}V(t_1)^\top
$$
is contained in the range of $U_1$, and hence we have
$$
U_1U_1^\top A_1=A_1, \quad\hbox{as well as }\ A_0V_0V_0^\top = A_0.
$$
Using the formulas of the splitting scheme we then calculate
\begin{eqnarray*}
 Y_1 &=& U_1S_1V_1^\top 
\\
&=&   U_1 \widetilde S_0 V_0^\top +U_1U_1^\top \Delta A
\\
&=& U_1 \widehat S_1 V_0^\top - U_1U_1^\top \Delta A V_0V_0^\top + U_1U_1^\top \Delta A 
\\
&=& U_0S_0V_0^\top+  \Delta A V_0V_0^\top - U_1U_1^\top \Delta A V_0V_0^\top  + U_1U_1^\top \Delta A 
\\
&=& A_0 +A_1V_0V_0^\top- A_0 - A_1V_0V_0^\top + U_1U_1^\top A_0  + A_1 - U_1U_1^\top A_0  = A_1,
\end{eqnarray*}
which is the stated result.
\end{proof}

Consider now a small perturbation to a matrix of rank less than $r$: with a small parameter~$\eps$, assume that (with primes as notational symbols, not derivatives), 
$$
A(t) = A'(t) + \eps A''(t) \quad\hbox{ with rank$(A'(t))=q<r$},
$$
where $A'$ and $A''$ and their derivatives are bounded independently of $\eps$. We factorize
$$
A'(t) = U'(t)S'(t)V'(t)^\top
$$
with $U'(t)$ and $V'(t)$ having $q$ orthonormal columns and with an invertible $q\times q$ matrix $S'(t)$.

We apply the splitting integrator for the dynamical rank-$r$ approximation of $A(t)$ with starting value
$$
Y_0 = A'(t_0) + \eps A_0'', \qquad \hbox{rank}(Y_0)= r,
$$
where $A_0''$ is bounded independently of $\eps$ (but may differ from $A''(t_0)$).
We compare the result of the rank-$r$ algorithm with that of the rank-$q$ algorithm starting from
$$
\bar Y_0 = A'(t_0) + \eps \bar A_0'', \qquad \hbox{rank}(\bar Y_0)=q<r.
$$
\begin{theorem}
In the above situation, let $Y_n$ and $\bar Y_n$ denote the results of $n$ steps of the splitting integrator for the rank-$r$ approximation and rank-$q$ approximation, respectively, applied with step size $\h$.  Then, as long as $t_0+n\h\le T$,
$$
\| Y_n - \bar Y_n \| \le C(\eps+\h),
$$
where $C$ is independent of $n$, $\h$ and $\eps$ (but depends on $T-t_0$).
\end{theorem}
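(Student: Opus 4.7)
The plan is to bound $\|Y_n - \bar Y_n\|$ by reducing both sequences to a common reference trajectory on which the exactness statement of Theorem~\ref{thm:exact} applies, and then controlling the deviations from the reference via a stability estimate.

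For the rank-$q$ algorithm, Theorem~\ref{thm:exact} tells me that the unperturbed run---the rank-$q$ algorithm applied to the rank-$q$ function $A'(t)$ starting from $A'(t_0)$---reproduces $A'(t_n)$ exactly at every step. The actual run $\bar Y_n$ differs from this reference only in its initial datum (by an amount of size $\varepsilon$) and in the per-step increments (by $\varepsilon\,\Delta A''$ at step $k$). A standard perturbation/Gronwall argument for the rank-$q$ scheme, whose intermediate $S_k$ stay well conditioned because $S'(t)$ is invertible and bounded away from singularity, should then yield $\|\bar Y_n - A'(t_n)\| \le C_1(\varepsilon+h)$ uniformly for $t_n \le T$.

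For the rank-$r$ algorithm I would argue analogously, with one twist. The candidate reference run is the rank-$r$ algorithm applied to $A'(t)$ starting from $A'(t_0)$. Since $r>q$, this initial datum has a singular $S_0$, but the algorithm of Section~\ref{sec-3-2} only uses matrix products and QR decompositions, so it remains perfectly well defined. I expect Theorem~\ref{thm:exact} still to hold in this degenerate case, either by inspecting its proof using an ``extended'' factorization $A'(t)=U(t)S(t)V(t)^{\top}$ with $r$ orthonormal columns in $U,V$ and $S$ padded with zeros, or by a continuity/limit argument from nearby full-rank perturbations. Granting this extension, the rank-$r$ reference run also reproduces $A'(t_n)$ exactly, and a perturbation estimate of the same type as above will give $\|Y_n - A'(t_n)\|\le C_2(\varepsilon+h)$. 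The theorem then follows from the triangle inequality
\begin{equation*}
  \|Y_n - \bar Y_n\| \le \|Y_n - A'(t_n)\| + \|A'(t_n) - \bar Y_n\| \le (C_1+C_2)(\varepsilon+h).
\end{equation*}

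The main obstacle will be this perturbation/stability estimate for the rank-$r$ algorithm, because the orthonormal factors $U_k,V_k$ produced by the QR steps of Section~\ref{sec-3-2} are \emph{not} Lipschitz continuous in their inputs when the corresponding matrix is nearly rank-deficient, which is precisely our situation. The way out is to observe that the output of a single step depends only on the orthogonal projectors $U_1U_1^{\top}$ and $V_0V_0^{\top}$, not on the QR factors themselves: a short calculation using the formulas of Section~\ref{sec-3-2} gives
\begin{equation*}
  Y_1 \;=\; Y_0 \;+\; (I-U_1U_1^{\top})\,\Delta A\,V_0V_0^{\top} \;+\; U_1U_1^{\top}\,\Delta A,
\end{equation*}
and projectors onto the dominant $q$-dimensional invariant subspace do behave Lipschitz-continuously under perturbation, while contributions from the $O(\varepsilon)$-small singular directions are themselves of size $O(\varepsilon)$ and can be absorbed into the final bound. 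Making this decomposition rigorous---presumably via Wedin's $\sin\Theta$ theorem applied at each step, together with a Gronwall-type accumulation of local perturbations over $O(1/h)$ steps---is where the bulk of the technical work will lie, and is what accounts for the additional $O(h)$ term in the stated bound.
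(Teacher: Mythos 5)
Your architecture---route both runs through the exactness result of Theorem~\ref{thm:exact} applied to $A'(t)$ and finish with the triangle inequality---is genuinely different from the paper's proof. The paper never invokes Theorem~\ref{thm:exact} here; it works inside each step with the exact solutions of the three split differential equations, fixes a gauge in which $S$ stays block-diagonal with $S'=O(1)$ and $S''=O(\eps)$, proves the key bound $U''^\top\dt A V''=O(\eps+\h)$, and shows that the primed factors obey the rank-$q$ split equations up to $O(\eps+\h)$ defects in the derivatives, so that the local error against the rank-$q$ run is $O(\h\eps+\h^2)$ and stability of the \emph{rank-$q$} method does the accumulation. The rank-$q$ half of your plan is sound, and so is extending Theorem~\ref{thm:exact} to a rank-deficient $A'$.

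The gap is the rank-$r$ estimate $\|Y_n-A'(t_n)\|\le C_2(\eps+\h)$, which you want to get by comparing two runs of the rank-$r$ step map, controlling the projectors by Wedin's theorem and ``absorbing'' the weak directions as $O(\eps)$. Neither part works. The $r-q$ weak left singular directions of $K_1=U_0S_0+\Delta A\,V_0$ belong to singular values of size $O(\eps+\h)$, so the singular-value gap relevant to Wedin's theorem for those directions is itself $O(\eps+\h)$: an $O(\eps)$ change of the data can rotate them by $O(1)$, and the reference run (whose $K_1$ has exact rank $q$) has weak directions that are entirely arbitrary, hence uncorrelated with those of the actual run. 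Moreover, the weak-direction contribution to $U_1U_1^\top\Delta A$ in your one-step formula is $P_{r-q}\Delta A = O(\h)$ per step, not $O(\eps)$; the difference of the two runs therefore acquires $O(\h)$ per step from these terms, and a Gronwall accumulation over $n=O(1/\h)$ steps only gives $O(1)$. The rank-$r$ step map is simply not Lipschitz-stable under over-approximation, which is exactly why the paper compares factors rather than full iterates. Your one-step identity is nonetheless the right starting point for a repair: rewritten as $Y_1-Y_0-\Delta A=-(I-U_1U_1^\top)\,\Delta A\,(I-V_0V_0^\top)$ and telescoped directly against $A(t_n)$, it eliminates the reference run, and the weak directions then only help (they enlarge $\calR(U_1)$). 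But bounding each defect by $O(\h(\eps+\h))$ still requires showing inductively that $\calR(U_k)$ and $\calR(V_k)$ contain the dominant ranges of $A'(t_k)$ and $A'(t_k)^\top$ up to $O(\eps+\h)$ and that the part of $\dt A$ annihilated on neither side is $O(\eps+\h)$ --- precisely the content of parts (a) and (b) of the paper's proof, which your proposal does not supply.
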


We note that by the standard error estimates of splitting methods, the integration error of the rank-$q$ approximation is $\bar Y_n - \bar Y(t_n) = O(h^p)$, uniformly in $\eps$, for the integrator of order~$p$. Furthermore, it follows from the over-approximation lemma in \cite{KocL07}, Section 5.3, that the difference of the rank-$r$ and rank-$q$ approximations is bounded by $Y(t)-\bar Y(t) = O(\eps)$.

\begin{proof}
(a)
We factorize
$$
Y_0=U_0S_0V_0^\top,
$$
where $U_0=(U_0',U_0'')\in \R^{m\times r}=\R^{m\times q}\times \R^{m\times(r-q)}$
and $V_0=(V_0',V_0'')\in \R^{n\times r}=\R^{n\times q}\times \R^{n\times(r-q)}$ have orthonormal columns.
 $S_0$ is chosen as an $r\times r$ matrix in block-diagonal form
$$
S_0 = \begin{pmatrix}
S_0' & 0 \\ 0 & S_0''
\end{pmatrix}
\quad\hbox{ with }\ S_0'=S'(t_0) \hbox{ and } S_0''=O(\eps).
$$
We consider the differential equation in the first splitting step, 
$\dt Y_I (t)= \dt A(t) P_{\calR(Y_I^\top(t))}$. We factorize (omitting the subscript $I$ and the argument $t$)
$$
Y = USV^\top, 
$$
where $U$ and $V$ have $r$ orthonormal columns, so that we have the equation
\begin{equation}\label{USV-I}
\dt USV^\top + U\dt SV^\top + US\dt V^\top = \dt A VV^\top.
\end{equation}
As is shown in  the proof of Lemma 5.4 of \cite{KocL07} (see also \cite{DieE99}), the decomposition becomes unique if we impose that $S$ stays block-diagonal,
$$
S= \begin{pmatrix}
S' & 0 \\ 0 & S''
\end{pmatrix},
$$
and 
$$
U^\top\dt U = H, \quad V^\top \dt V =K
$$
with $r\times r$ matrices of the block form
$$
H = \begin{pmatrix} 0 & H_{12} \\ H_{21} & 0\end{pmatrix}, \qquad
K = \begin{pmatrix} 0 & K_{12} \\ K_{21} & 0\end{pmatrix},
$$
which are skew-symmetric, $H_{12}=-H_{21}^T$ and $K_{12}=-K_{21}^T$.
With the corresponding decompositions $U=(U',U'')$ and $V=(V',V'')$ the proof of Lemma 5.4 of \cite{KocL07} yields that
$$
H_{12}=  -(S')^{-\top}V'^\top \dt A^\top U'' + O(\eps),\quad
K_{12}=  -(S')^{-1}U'^\top \dt A V'' + O(\eps),
$$
and in particular $H_{12}=O(1)$, $K_{12}=O(1)$.
Multiplying (\ref{USV-I}) with $U^\top$ from the left and with $V$ from the right, we obtain
$$
HS+\dt S+ SK^\top = U^\top \dt A V,
$$
which yields on the diagonal blocks
$$
\dt S' = U'^\top \dt A V', \quad \dt S''= U''^\top \dt A V''.
$$
From (\ref{USV-I}) we further obtain
$$
(U'S')^{\dt{\phantom{.}}}+ U'' S'' K_{12}^\top = \dt A V'
$$
and, using also the above equation for $\dt S'$,
$$
S'\dt V'^\top + H_{12} S'' V''^\top = U'^\top \dt A V'' V''^\top.
$$
We show in part (b) of the proof below that $U''^\top \dt A V''=O(\eps+\h)$ 
 (but we cannot conclude the same for $U'^\top \dt A V''$). 
In summary, we get (indicating now the subscript~$I$)
\begin{equation*}
\begin{split}
&\dt S_I' = U_I'^\top \dt A V_I', \qquad \dt S_I''=O(\eps+h)
\\
&
(U_I'S_I')^{\dt{\phantom{.}}} = \dt A V_I' + O(\eps+h)
\\
&S_I'\dt V_I'^\top  = U_I'^\top \dt A V_I'' V_I''^\top + O(\eps+h).
\end{split}
\end{equation*}
For the second step in the splitting we obtain similarly
\begin{equation*}
\begin{split}
&\dt S_{II}' = -U_{II}'^\top \dt A V_{II}', \qquad \dt S_{II}''=O(\eps+h)
\\
&
\dt U_{II}' S_{II}'= - U_{II}'' U_{II}''^\top \dt A V_{II}' + O(\eps+h)
\\
&S_{II}'\dt V_{II}'^\top  = - U_{II}'^\top \dt A V_{II}'' V_{II}''^\top + O(\eps+h),
\end{split}
\end{equation*}
and for the third step we obtain
\begin{equation*}
\begin{split}
&\dt S_{III}' = U_{III}'^\top \dt A V_{III}', \qquad \dt S_{III}''=O(\eps+h)
\\
&
\dt U_{III}' S_{III}'= U_{III}'' U_{III}''^\top \dt A V_{III}' + O(\eps+h)
\\
&(S_{III}' V_{III}^\top)^{\dt{\phantom{.}}}   = U_{III}'^\top \dt A  + O(\eps+h).
\end{split}
\end{equation*}

Comparing these equations with those for $\bar S,\bar U, \bar V$ in the rank-$q$ splitting method, it follows that 
\begin{equation*}
\begin{split}
&
S_1' = S_{III}'(t_1) = \bar S_1 + O(\h\eps + \h^2)
\\
&
U_1' = U_{III}'(t_1) = \bar U_1 + O(\h\eps + \h^2)
\\
&
V_1' = V_{III}'(t_1) = \bar V_1 + O(\h\eps + \h^2).
\end{split}
\end{equation*}
Since  stable error propagation in the rank-$q$ splitting method is obtained by the standard argument using the Lipschitz continuity of the right-hand side of the differential equation (uniformly in $\eps$), we conclude to the assertion of the theorem.

(b) We decompose the rank-$q$ matrix
$$
A'(t) = \hat U'(t) \hat S'(t) \hat V'(t)^\top \quad\ \hbox{ with}\quad
 \hat U'(t)  \dt{\hat U'}(t)=0,\  \hat V'(t)  \dt{\hat V'}(t)=0
$$
and $\hat U'(t_0)=U'_0+O(\eps+h)$, $\hat V'(t_0)=V_0'+O(\eps+h)$. This choice of initial factors is possible because of the condition $A'(t_0)=Y_0+O(\eps+h)$, and the factorization at later $t$ exists by solving the rank-$q$ differential equations (\ref{integr:system}). For $t_0\le t \le t_0 +\h$ we have $U'(t)= U'(t_0) + O(\h) = \hat U'(t_0) +O(\eps+\h)=\hat U'(t) +O(\eps+\h)$ and $V'(t)=  \hat V'(t) +O(\eps+\h)$ , and hence
\begin{equation*}
\begin{split}
\dt{A'}(t) = \dt{\hat U'}(t) \hat S'(t) V'(t)^\top + 
 U'(t) \dt{\hat S'}(t) V'(t)^\top +  U'(t) \hat S'(t) \dt{\hat V'}(t)^\top + O(\eps+\h).
\end{split}
\end{equation*}
Since $U''(t)^\top U'(t)=0$ and  $V''(t)^\top V'(t)=0$, this yields the desired bound
$$
U''(t)^\top \dt A(t) V''(t)=O(\eps+\h),
$$
and the proof is complete.
\end{proof}

\section{Numerical experiments}
\label{sec-numexp}
\subsection{Problem setting}
The example is taken from \cite{KocL07}. We generate 
time-dependent matrices $A(t)$ as
$$
     A(t) = Q_1(t) (A_1 + \exp(t) A_2) Q_2(t),
$$
where the orthogonal matrices $Q_1(t)$ and $Q_2(t)$ are generated as the
solutions of differential equations
$$
 \dt{Q}_i = T_i Q_i, \quad i=1,2
$$
with random skew-symmetric matrices $T_i$. The matrices $A_1, A_2$ are
generated as follows. First, we generate a $10 \times 10$ matrix as an
identity matrix plus a matrix with random entries distributed
uniformly over $[0,0.5]$. This matrix is then set as a leading block
of a $100 \times 100$ matrix. After that, we add a perturbation to
this enlarged matrix. The perturbation is generated as a matrix with uniformly
distributed entries on $[0,\eps]$.  For small $\eps$ this matrix is close to a rank-$10$ matrix. If the rank of the approximation is chosen larger than $10$, the norm of $S^{-1}$ in (\refeq{integr:system})
will be
large and this leads to instability with standard time discretizations.
\subsection{Numerical comparisons}
We will compare the following schemes. The first scheme is the
standard implicit midpoint rule combined with fixed point iteration
applied directly to the system
\eqref{integr:system}. 
We test the proposed
splitting schemes with different orders of splitting and with/without
symmetrization. We denote by KSL the scheme of Section 3.2, where first $K=US$ is updated, then $S$, then $L=VS^\top$. By KLS we denote the scheme where first $K$, then $L$, then $S$ are updated. We thus consider the following methods:
\begin{enumerate}
\item KLS scheme (first order)
\item KLS scheme with symmetrization (second order)
\item KSL scheme (first order)
\item KSL scheme with symmetrization  (second order)
\end{enumerate}
We are interested in the approximation errors $\|Y(t) - A(t)\|$ for
each particular scheme and different values of $r$ and
$\eps$. The results are presented in
Figure~\ref{ksl:figure1}, where we also plot the error of the best rank-$r$ approximation to $A(t)$ computed by SVD. Note that both KSL schemes perform
remarkably better in the overapproximation case (subplot d). The
midpoint rule is unstable in case d), whereas both KLS schemes have
significantly higher error than the KSL schemes. All computations are done with constant step size $h=10^{-3}$.
\begin{figure}[!htbp]
\begin{center}
\resizebox{12cm}{!}
{
\input{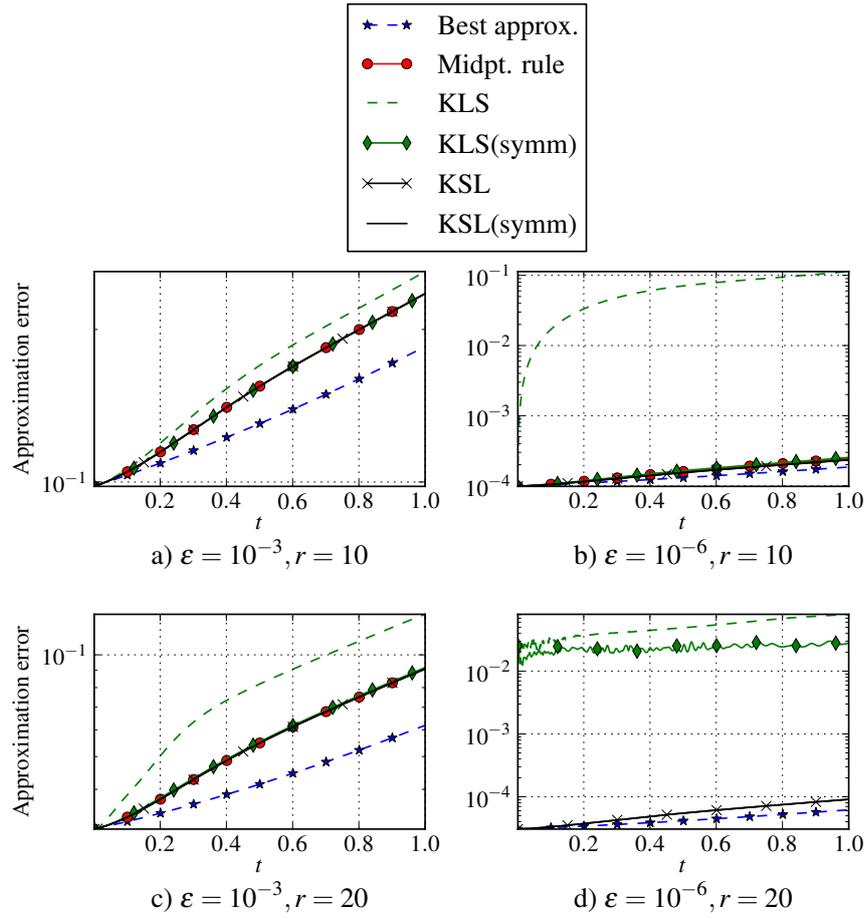}
}
\end{center}
\caption{Dynamical low-rank approximation error for different schemes
  and different values of approximation rank $r$ and perturbation size $\eps$.}\label{ksl:figure1}
\end{figure}

To test the convergence properties of the schemes with respect to the
timestep $h$, we have computed the numerical order of the different
schemes using the Runge rule:
$$
 \| y(h) - y(h/2) \| \,/ \, \|y(h/2)-y(h/4)\| \approx 2^p \quad\hbox{ in case of order $p$.} 
$$

\begin{table}[!htbp]
\parbox{0.45\linewidth}{
    \centering
\begin{tabular}{llll}
\hline\noalign{\smallskip}
    & $p$ & Appr. err. \\    
\noalign{\smallskip}\hline\noalign{\smallskip}
Midpoint  & 2.0023 &   0.2200  \\
KLS       & 1.0307  &  1.8133 \\
KLS(symm) & 1.8226 &   0.2215 \\
KSL       & 1.0089 &   0.2188 \\
KSL(symm) & 2.005  &   0.2195 \\ 
\noalign{\smallskip}\hline
\end{tabular}
\caption{$\eps=10^{-3}, r=10$}
}
\hfill
\parbox{0.45\linewidth}{
\begin{tabular}{llll}
\hline\noalign{\smallskip}
    & $p$ & Appr. err. \\    
\noalign{\smallskip}\hline\noalign{\smallskip}
Midpoint  & 2.0024   & 0.0188  \\
KLS       & 1.0309   & 1.8030 \\
KLS(symm) & 1.8231   & 0.0324 \\
KSL       & 1.0082   & 0.0002  \\
KSL(symm) & 2.0049   & 0.0002 \\
\noalign{\smallskip}\hline
\end{tabular}
\caption{$\eps=10^{-6}, r=10$}
}
\end{table}
\begin{table}[!htbp]
\parbox{.45\linewidth}{
    \centering
\begin{tabular}{llll}
\hline\noalign{\smallskip}
    & $p$ & Appr. err. \\    
\noalign{\smallskip}\hline\noalign{\smallskip}
Midpoint & 0.0001   & 0.1006 \\
KLS      & 0.8154   & 1.4224 \\
KLS(symm)& 1.4911   & 0.3142 \\
KSL      & 1.0354   & 0.0913 \\
KSL(symm)& 1.9929   & 0.0913  \\
\noalign{\smallskip}\hline
\end{tabular}
\caption{$\eps=10^{-3}, r=20$}
}
\hfill
\parbox{.45\linewidth}{
\begin{tabular}{lccc}
\hline\noalign{\smallskip}
    & $p$ & Appr. err. \\    
\noalign{\smallskip}\hline\noalign{\smallskip}
Midpoint  & -               & failed  \\
KLS       & 0.9633          & 1.3435 \\
KLS(symm) & 0.3127          & 1.5479 \\
KSL       & 1.0362          & 9.1316e-05  \\
KSL(symm) & 1.993           & 9.1283e-05\\
\noalign{\smallskip}\hline
\end{tabular}
\caption{$\eps=10^{-6}, r=20$}
}
\end{table}

The approximation error listed is the error at $t=1$ with respect to the given matrix $A(t)$, not with respect to the solution $Y(t)$ of rank $r$ of the differential equations \eqref{integr:system}.
In the overapproximation case both KLS schemes perform much
worse, and the symmetrized version loses its second order. 
The KSL scheme, that is, the scheme of Section~3.2, and its symmetrized version (see Section 3.3) clearly outperform the other methods.

The last test describes the stability of the schemes with respect to the time step $h$. In Figure~\ref{ksl:test-tau} we plot the
approximation error at time $t=1$ for both KSL schemes and the midpoint rule for different $h$ ranging from $10^{-1}$ to $10^{-3}$. The
rank $r$ was set to $20$ (overapproximation case) and the noise level was chosen to be $\eps = 10^{-3}$.
\begin{figure}[!htbp]
\begin{center}
\resizebox{10cm}{!}
{
    \input{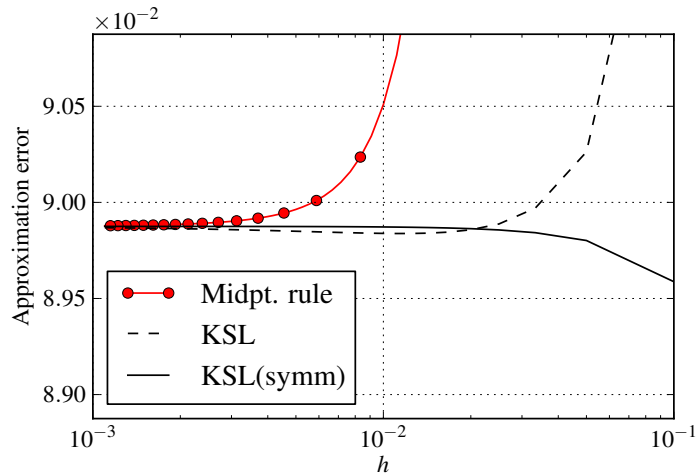}
}
\end{center}
\caption{Dynamical low-rank approximation error for different schemes
  versus stepsize $h$, for $r=20$ and $\eps=10^{-3}$}\label{ksl:test-tau}
\end{figure}
The midpoint rule becomes unstable for large values of $h$, whereas 
both KSL schemes give a good result for the whole range of stepsizes. It is interesting to compare this value with the best 
low-rank approximation of $A(1)$. The error of the best rank-$10$ approximation of $A(1)$ (computed by the SVD) is approximately $10^{-2}$ 
and the error of the best rank-$20$ approximation is approximately $3 \cdot 10^{-3}$. The dynamical low-rank approximation thus captures the ``smooth'' component of the solution.   

\section{Conclusion and perspectives}

We have proposed and analyzed a fully explicit, computationally inexpensive integrator for the dynamical low-rank approximation of time-dependent matrices that is based on splitting the projector onto the tangent space of the low-rank manifold. The integrator has remarkable robustness under over-approximation with a too high rank. While standard explicit and implicit integrators break down in such a situation, the integrator proposed here does not suffer from the ill-conditioning or singularity of the small matrix factor in the orthogonal low-rank factorization.

The robustness under
overapproximation enables one to control the rank adaptively. Lowering
the rank is trivial, but thanks to the robustness under a reduced rank we are able to raise the rank in a
natural way, continuing the computations with the higher rank starting from
the values of lower rank. This is  not possible with
standard integrators applied to the higher-rank differential
equations, which would have to start from a singular matrix factor, in which case the differential equations are not well-defined.

Another application of the proposed integrator is in optimization algorithms on a low-rank manifold, such as cg or Newton's method. There, an update $A+\Delta A$ to a low-rank iterate $A$ has to be truncated (or retracted in another terminology) back to the given low rank. This can be done efficiently by applying one step of our integrator to $A+t\Delta A$ at $t=1$.

The integrator can be extended  to the low-rank approximation of tensors in the tensor train and hierarchical Tucker formats. This extension will be studied in forthcoming work.


\begin{thebibliography}{10}

\bibitem{ArnJ12*}
{\rm A.~Arnold and T.~Jahnke}, {\rm On the approximation of high-dimensional
  differential equations in the hierarchical {T}ucker format}, tech. rep., KIT,
  Karlsruhe, 2012.

\bibitem{DieE99}
{\rm L.~Dieci and T.~Eirola}, {\rm On smooth decompositions of matrices}, SIAM
  J. Numer. Anal. {\bf 20}, 800--819 (1999).

\bibitem{HacK09}
{\rm W.~Hackbusch and S.~K{\"u}hn}, {\rm A new scheme for the tensor
  representation}, J. Fourier Anal. Appl. {\bf 15}, 706--722  (2009).

\bibitem{HaiLW06}
{\rm E.~Hairer, C.~Lubich, and G.~Wanner}, {\em Geometric Numerical
  Integration}, Springer-Verlag, Berlin, second~ed., 2006.

\bibitem{KocL07}
{\rm O.~Koch and C.~Lubich}, {\rm Dynamical low-rank approximation}, SIAM J.
  Matrix Anal. Appl. {\bf 29}, 434--454 (2007).

\bibitem{KocL10}
\leavevmode\vrule height 2pt depth -1.6pt width 23pt, {\rm Dynamical tensor
  approximation}, SIAM J. Matrix Anal. Appl. {\bf 31}, 2360--2375 (2010).

\bibitem{Lub08}
{\rm C.~Lubich}, {\em From quantum to classical molecular dynamics: reduced
  models and numerical analysis}, European Math. Soc., Zurich, 2008.

\bibitem{LRSV12*}
{\rm C.~Lubich, T.~Rohwedder, R.~Schneider, and B.~Vandereycken}, {\rm
  Dynamical approximation of hierarchical {Tucker} and tensor-train tensors},
 Preprint 126, DFG SPP 1324, 2012.
 
\bibitem{McLQ02}
{\rm R.~I. McLachlan and G.~R.~W. Quispel}, {\rm Splitting methods}, Acta
  Numer. {\bf 11}, 341--434 (2002).

\bibitem{MeyGW09}
{\rm H.~Meyer, F.~Gatti, and G.~Worth (eds.)}, {\em Multidimensional Quantum
  Dynamics: MCTDH Theory and Applications}, Wiley, New York, 2009.

\bibitem{MeyMC90}
{\rm H.~Meyer, U.~Manthe, and L.~S. Cederbaum}, {\rm The multi-configurational
  time-dependent {H}artree approach}, Chemical Physics Letters {\bf 165}, 73--78(1990).

\bibitem{NonL08}
{\rm A.~Nonnenmacher and C.~Lubich}, {\rm Dynamical low-rank approximation:
  applications and numerical experiments}, Math. Comput. Simulation {\bf 79}, 1346--1357 (2008).
  
\bibitem{osel-tt-2011}
{\rm I.~V. Oseledets}, {\rm Tensor-train decomposition}, SIAM J. Sci. Comput.
  {\bf 33}, 2295--2317 (2011).

\bibitem{okhs-dyntt-2012}
{\rm I.~V. Oseledets, B.~N. Khoromskij, and R.~Schneider}, {\rm Efficient
  time-stepping scheme for dynamics on {T}{T}-manifolds}, Preprint~24, MPI MIS,
  2012.

\end{thebibliography}
\end{document}